\documentclass[12pt,final,reqno]{amsart}
\usepackage{ntung}

\usepackage{ntung_commands}

\usepackage{richardcommands}

\begin{document}

\title{On efficient graph covers and steered random walks}
\author{Nathan Tung}
\author{Richard Ueltzen}
\address{Stanford University, CA 94305, USA}
\email{$\{$ntung,rueltzen$\}$@stanford.edu}
\begin{abstract}
    We prove that the vertices of any $n$-vertex graph can be partitioned into pieces of radius $r = O(\log n)$ such that the sum of the sizes of their closed neighborhoods is at most $4n$. This answers a recent question of Bukh and Dubroff and directly yields an improvement to their upper bound on the optimal cover time of the $\epsilon$-steered random walk. We also demonstrate that our bound on $r$ is best possible up to a constant factor for graphs with strong vertex expansion.
\end{abstract}
\maketitle

\section{Introduction}

Very recently, Bukh and Dubroff~\cite{bukh-dubroff2026} introduced $(r, k)$-covers for undirected graphs to construct faster random walks via infrequent steering.
For $r \geq 0$ and $k \geq 1$, an $(r, k)$-cover for $G$ is a collection of subsets $\{U_i \subseteq V(G)\}$ with $\bigcup_i U_i = V(G)$ such that each $G[U_i]$ has radius at most $r$, and with $\sum_i |N^{\leq 1}(U_i)| \leq k n$.
Bukh and Dubroff~\cite{bukh-dubroff2026} proved that every $n$-vertex graph admits a $(2^{a-1}-1, n^{1/a+o(1)})$-cover for any positive integer $a$ using an expansion argument and imitating the proof of Vitali's covering lemma.
In particular this shows that any $G$ admits a $(2^{O(\sqrt{\log n})}, 2^{O(\sqrt{\log n})})$-cover. They also provide algebraic constructions to show that their bound for the existence of $(r, k)$-covers is tight for small values of the radius parameter $r$.
Namely, if $k(n, r)$ is the minimal $k$ such that every $n$-vertex graph $G$ admits an $(r, k)$-cover, then they show that
\begin{align*}
    k(n, 1) = n^{1 / 2 + o(1)}, \quad k(n, 2) = n^{1 / 2 + o(1)}, \quad k(n, 3) = n^{1 / 3 + o(1)}.
\end{align*}
Furthermore, they asked whether better covers can be found in the general case.
Using a clustering algorithm from \cite{milleralgo}, we improve their bound for $r \geq 8$.
\begin{restatable}{theorem}{covering}\label{main-theorem}
    For any graph $G$ with $n \geq 1$ vertices and for any $r \geq 1$, there exists a partition $\{U_i\}$ of the vertex set $V(G)$ such that each $G[U_i]$ is connected with radius at most $r$ and $\sum_i |N^{\leq 1}(U_i)| \leq 2 (3n)^{2 / r} n$.
    In particular, $G$ admits a $(r, 2 (3n)^{2 / r})$-cover.
\end{restatable}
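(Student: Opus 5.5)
We build the partition with a ball-growing procedure in the spirit of the Miller--Peng--Xu clustering from \cite{milleralgo}, run deterministically. Set $\beta = (3n)^{2/r}$. Maintain a set $R$ of unclustered vertices, initially $V(G)$. While $R \neq \emptyset$, pick any $v \in R$ and grow balls $B_j = N^{\leq j}_{G[R]}(v)$ for $j = 0,1,2,\dots$. Stop at the first index $j$ with
\[
|N^{\leq 1}_{G}(B_j)| \leq \beta\, |B_j|,
\]
where we measure neighbourhoods in the whole graph $G$. Declare $U = B_j$ a piece and delete it from $R$. Each piece is a ball in an induced subgraph, so $G[U]$ is connected with radius at most $j$.

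\textbf{Step 1 (size bound).} If each piece satisfies the stopping inequality, then summing over pieces gives $\sum_i |N^{\leq 1}(U_i)| \leq \beta \sum_i |U_i| = \beta n$. This is already within the factor $2(3n)^{2/r}n$, so the factor $2$ leaves slack we may need.

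\textbf{Step 2 (radius bound; the main obstacle).} We must show the stopping index satisfies $j \leq r$. Suppose not, so $|N^{\leq 1}_G(B_j)| > \beta |B_j|$ for all $j < r$. The difficulty is that $N^{\leq 1}_G(B_j)$ may include already-removed vertices outside $R$. Then it is not contained in $B_{j+1}$, and the naive chain $|B_{j+1}| > \beta |B_j|$ fails.

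To handle this, we use a two-part stopping rule. Let $N^{\leq 1}_G(B_j) \setminus B_j$ split into $A_j = $ neighbours inside $R$ and $C_j = $ neighbours outside $R$, so $A_j \subseteq B_{j+1} \setminus B_j$. We stop when both $|B_{j+1}| \leq \beta^{1/2}|B_j|$ and a charge condition on $|C_j|$ hold. The exponent $2/r$ suggests the growth factor per step is $\beta^{1/2} = (3n)^{1/r}$. Growth beyond $r$ steps would then give $|B_r| > (3n) \geq n$, a contradiction.

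The outside-neighbours $C_j$ are charged to earlier pieces. A vertex $w \in C_j$ lies in some earlier piece $U'$, and $U \subseteq N^{\leq 1}(U')\setminus U'$ sits on its boundary. The total double-counting is symmetric: $\sum_i |N(U_i)\setminus U_i|$ counts pairs $(i,w)$ with $w$ adjacent to $U_i$. These pairs split into those where $w$ is removed later (charged to the $R$-boundary $A_j$, controlled by growth) and those where $w$ was removed earlier. A pair $(U,w)$ with $w$ removed earlier corresponds to a later-removed boundary vertex of the piece containing $w$. Hence
\[
\sum_i |N^{\leq 1}(U_i)| \leq n + 2\sum_i |A^{(i)}|.
\]
Here $A^{(i)}$ is the $R$-boundary at stopping time. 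The factor $2$ in the theorem is consistent with this.

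\textbf{Step 3 (conclusion).} It therefore suffices to stop at the first $j$ with $|B_{j+1}| \leq \beta|B_j|$, measured entirely within $G[R]$, which gives $|A^{(i)}| \leq (\beta-1)|U_i|$. Since $|B_j| \leq n$, this happens for some $j \leq \log_\beta n < r/2 \leq r$. Then $\sum_i |N^{\leq 1}(U_i)| \leq n + 2(\beta-1)n \leq 2\beta n = 2(3n)^{2/r} n$.

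Because the radius bound is achieved with slack, we can instead optimize to $\beta = (3n)^{1/r}$, which is where the $3n$ constant and its squared exponent would be tuned. The charging identity in Step 2 is the key point to verify carefully. Every edge of $G$ between distinct pieces is counted by whichever endpoint's piece was formed first, as part of that piece's $R$-boundary.
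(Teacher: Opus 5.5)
\textbf{There is a genuine gap.} Your radius argument in Step 3 is fine. The efficiency bound, however, rests on the charging inequality $\sum_i |N^{\leq 1}(U_i)| \leq n + 2\sum_i |A^{(i)}|$, and that inequality is false.

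Take a pair $(U,w)$ with $w$ in an earlier piece $U'$. It does yield a vertex $u \in U \cap A^{(U')}$ adjacent to $w$. But many vertices $w \in U'$ can share the same $u$, so such pairs are bounded by the number of edges from $U'$ to later pieces, not by $|A^{(U')}|$. Your last sentence conflates edges with boundary vertices.

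Here is a concrete counterexample. Let $G = K_{a,a}$ with sides $X, Y$, so $n = 2a$. Take $r = 2\log(3n)$, which gives $\beta = 2$; any $2 \leq \beta < a+1$ behaves the same way.
\begin{itemize}
\item From any start $x \in X$ you get $|B_1| = a+1 > \beta$ and $|B_2| = 2a \leq \beta |B_1|$. So $U_1 = \{x\} \cup Y$ and $A^{(1)} = X \setminus \{x\}$.
\item The remaining $a-1$ vertices are isolated in $G[R]$ and become singletons. Each has $|N^{\leq 1}(\{x'\})| = a+1$.
\item The total is $a^2 + 2a - 1 \approx n^2/4$. Your inequality predicts $4a-2$, and the theorem's bound is $8a$, which the total exceeds once $a \geq 7$.
\end{itemize}
By symmetry, no choice of start vertices helps.

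The Step 1 rule, with neighbourhoods measured in $G$, fails on the same example in a different way. A leftover $x'$ has $B_j = \{x'\}$ for all $j$ and $|N^{\leq 1}_G(\{x'\})| = a+1 > \beta$, so the procedure never terminates. The two-part rule of Step 2 is never actually specified.

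The underlying issue is that sequential ball carving controls only each piece's boundary into \emph{later} vertices. But $\sum_i |N^{\leq 1}(U_i)| = \sum_v \#\{i : N^{\leq 1}(v) \cap U_i \neq \emptyset\}$ also counts how many \emph{earlier} pieces a late vertex touches.

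The paper handles both directions at once with the Miller--Peng--Xu random-shift clustering:
\begin{itemize}
\item Draw i.i.d.\ exponential shifts $H_x$ with parameter $\theta = (3n)^{-1/r}$. Each $v$ joins a maximizer $x$ of $S_x(v) = H_x - \dist(v,x)$.
\item $N^{\leq 1}(v)$ meets $U_x$ only if $S_x(v) \geq \max_{y \neq x} S_y(v) - 2$. By memorylessness, the expected number of such $x$ is at most $\theta^{-2}$ for every $v$.
\item A union bound gives $H_x \leq r$ for all $x$, hence radius at most $r$, with probability at least $2/3$. Markov's inequality then finishes the proof.
\end{itemize}
Your proposal is missing some such symmetric, per-vertex counting argument.
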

Bukh and Dubroff asked if there always exists a $(\poly\log n, \poly\log n)$-cover for any $n$-vertex graph.
Taking $r = 2 \log(3n)$ in \Cref{main-theorem} yields that any $n$-vertex graph has a $(O(\log n), O(1))$-cover, thereby answering their question affirmatively.
\begin{corollary}\label{log-log-cover}
    Any  graph $G$ on $n \geq 1$ vertices admits a $(2 \log(3n), 4)$-cover.
\end{corollary}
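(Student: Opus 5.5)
The corollary follows from \Cref{main-theorem} by choosing $r$ so that $(3n)^{2/r}$ is at most $2$. The natural choice is $r = 2\log(3n)$, with $\log$ taken base $2$; this is the convention under which the stated constant $4$ comes out. With this choice,
\[
(3n)^{2/r} = (3n)^{1/\log(3n)} = 2 .
\]
\Cref{main-theorem} then gives a partition $\{U_i\}$ of $V(G)$. Each $G[U_i]$ is connected of radius at most $2\log(3n)$, and $\sum_i |N^{\leq 1}(U_i)| \leq 2\cdot 2\cdot n = 4n$.

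A partition covers $V(G)$, since $\bigcup_i U_i = V(G)$. It therefore satisfies the definition of a $(2\log(3n), 4)$-cover.

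The only point that needs care is that \Cref{main-theorem} is stated for $r \geq 1$, and one may want $r$ to be an integer. The requirement $r\ge 1$ holds because $n \geq 1$ gives $2\log(3n) \geq 2\log 3 > 1$.

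If $r$ must be an integer, set $r' = \lfloor 2\log(3n) \rfloor \geq 1$ and apply \Cref{main-theorem} with $r'$. This runs into a problem: $r' \le 2\log(3n)$ gives $(3n)^{2/r'} \geq 2$, which is the wrong direction for the bound. Rounding up to $\lceil 2\log(3n)\rceil$ gives the right direction for the factor $(3n)^{2/r}$, but the radius then slightly exceeds $2\log(3n)$.

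I would resolve this by noting that the definition of an $(r,k)$-cover only needs $G[U_i]$ to have radius at most $r$, so real $r$ makes sense. Radius is an integer, so "radius at most $2\log(3n)$" is the same as "radius at most $\lfloor 2\log(3n)\rfloor$". Provided the proof of \Cref{main-theorem} works for real $r \ge 1$, the real value $r = 2\log(3n)$ can be used directly, and no rounding step is needed. This is the single step I would check against the theorem's proof; everything else is direct substitution.
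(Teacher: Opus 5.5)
Your proposal is correct and matches the paper, which simply substitutes $r = 2\log(3n)$ into \Cref{main-theorem} to get $2(3n)^{2/r} = 4$. Your integrality concern resolves as you hoped: \Cref{main-theorem} is stated for arbitrary real $r \ge 1$, and its proof uses $r$ only through $\theta = (3n)^{-1/r}$ and the event $\{H_x \le r\}$, so it applies verbatim to the real value $r = 2\log(3n)$.
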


\subsection{Application to steered random walks} Starting at some vertex $s \in V(G)$, the $\eps$-steered random walk is identical to the simple random walk except that at each step there is a small probability $\eps > 0$ that a controller gets to choose the next step of the walk (among neighbors of the current vertex). This choice takes the form of a probability distribution $\phi$ over neighbors, which is allowed to consider as input the entire history of the walk. If $\tau_{\phi,s}(\eps)$ is the cover time of this walk, then
$$
T(G,\eps) \coloneqq \inf_\phi \max_{s \in V(G)} \EE \tau_{\phi,s}(\eps)
$$
is the optimal cover time guaranteed by some strategy $\phi$.
We concern ourselves with the quantity
$$
f_{\Delta}(n,\eps) \coloneqq \max_{G} T(G,\eps),
$$
where the maximum is over connected $n$-vertex graphs with maximum degree at most $\Delta$. We borrow this notation from \cite{bukh-dubroff2026}, which may be consulted for formal definitions.

Bukh and Dubroff show in \cite{bukh-dubroff2026} that if every $n$-vertex graph admits an $(r, k)$-cover, then for any natural number $\Delta$ and any $0 < \epsilon \leq 1$,
\begin{equation}
    f_\Delta(n, \epsilon) < 32 \epsilon^{-1} \Delta (r+1) k n \log^2 n.\label{general-bound-for-f-delta-n-epsilon}
\end{equation}
Plugging in their $(2^{O(\sqrt{\log n})}, 2^{O(\sqrt{\log n})})$-covers, their unconditional bound is
\begin{equation*}
    f_\Delta(n, \epsilon) < \epsilon^{-1} \Delta n2^{O(\sqrt{\log n})}.
\end{equation*}
By black-boxing \eqref{general-bound-for-f-delta-n-epsilon} and plugging in \Cref{log-log-cover}, we can remove the quasi-polynomial factor:
\begin{corollary}
    $f_\Delta(n, \epsilon) = O(\epsilon^{-1} \Delta n \log^3 n)$.
\end{corollary}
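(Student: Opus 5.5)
The plan is to black-box the conditional bound \eqref{general-bound-for-f-delta-n-epsilon} of Bukh and Dubroff and feed it the cover from \Cref{log-log-cover}. No new combinatorics is needed. The only points to watch are the hypotheses of \eqref{general-bound-for-f-delta-n-epsilon}: the radius parameter may need to be an integer, and the cover must exist for \emph{every} $n$-vertex graph.

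\textbf{Step 1: fix the parameters.} Fix $n$ and set $r \coloneqq \lceil 2\log(3n) \rceil$ and $k \coloneqq 4$. By \Cref{log-log-cover}, every $n$-vertex graph $G$ admits a $(2\log(3n), 4)$-cover. Since $G[U_i]$ having radius at most $2\log(3n)$ implies radius at most $r$, every $(2\log(3n),4)$-cover is also an $(r,4)$-cover. Alternatively, one can apply \Cref{main-theorem} directly with this integer $r \ge 2\log(3n)$, which gives $2(3n)^{2/r} \le 2(3n)^{1/\log(3n)}$. This is at most $4$ when $\log$ is base $2$, and at most $2e$ for natural log. Either constant suffices.

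\textbf{Step 2: substitute.} The hypothesis of \eqref{general-bound-for-f-delta-n-epsilon} now holds, so for every natural $\Delta$ and every $0<\epsilon\le 1$,
\begin{equation*}
  f_\Delta(n,\epsilon) < 32\,\epsilon^{-1}\Delta\,(r+1)\cdot 4\, n\log^2 n \le 128\,\epsilon^{-1}\Delta\,\bigl(2\log(3n)+2\bigr)\, n\log^2 n .
\end{equation*}

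\textbf{Step 3: absorb constants.} For $n\ge 2$ we have $2\log(3n)+2 = O(\log n)$, so the right-hand side is $O(\epsilon^{-1}\Delta n\log^3 n)$.

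The only mild obstacle is the degenerate case $n=1$, where $\log n = 0$. Here the graph is a single vertex with cover time $0$, so the bound holds trivially. This degenerate case is absorbed into the asymptotic statement in any event.
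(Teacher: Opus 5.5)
Your proposal is correct and is the paper's argument: plug the $(2\log(3n),4)$-cover of \Cref{log-log-cover} into the Bukh--Dubroff bound \eqref{general-bound-for-f-delta-n-epsilon}, then absorb $r+1 = O(\log n)$ into the constant. Your extra care in rounding $r$ up to an integer and in handling the degenerate case $n=1$ (where $\log n = 0$) is harmless, and slightly more careful than the paper's one-line derivation.
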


\subsection{Lower bound from vertex expansion} We also show that strong vertex expanders of maximum degree $\Delta$ with vertex-expansion $\Omega(\Delta)$ admit no local efficient covers.
\begin{restatable}{theorem}{uncoverable}
    \label{other-direction-theorem}
    Let $n, r$ be positive integers with $n$ even.
    Then there exists an $n$-vertex graph that admits no $(r, n^{1 / (r+1)}/15)$-cover.
\end{restatable}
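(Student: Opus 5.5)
The plan is a probabilistic construction: a random $d$-regular graph $G$ on $n$ vertices with $d$ of order $n^{1/(r+1)}$. The hypothesis that $n$ is even is exactly what lets $d$ be odd, so such graphs exist for every $d<n$. Two reductions come first. If $n^{1/(r+1)}<15$ there is nothing to prove, since every cover has $\sum_i |N^{\leq 1}(U_i)|\geq \sum_i|U_i|\geq n > kn$ when $k<1$. So assume $n^{1/(r+1)}\geq 15$ and set $d=\lfloor n^{1/(r+1)}\rfloor$ (or $d-1$ if parity forces it). Second, it suffices to prove that every $U$ with $G[U]$ of radius at most $r$ satisfies
\[
|N^{\leq 1}(U)| > \tfrac{d}{15}\,|U|.
\]
Summing over a cover then gives $\sum_i |N^{\leq 1}(U_i)| > \frac{d}{15}\sum_i|U_i|\geq \frac{d}{15} n$, which contradicts the required bound $\frac{n^{1/(r+1)}}{15}\,n$.

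To prove the per-set inequality, fix such a $U$ with centre $v$. Then $U$ lies in the radius-$r$ ball around $v$, so $|U|\leq 1+d+d(d-1)+\dots \leq 2d^r\leq 2n/d$. Let $T=N^{\leq 1}(U)$ and $t=|T|$. Since $G$ is $d$-regular, every edge at a vertex of $U$ lies inside $T$, so $e(G[T])\geq d|U|/2$. The argument splits on the size of $t$.
\begin{itemize}
\item \emph{Large $t$.} If $t\geq n/(Cd)$ for a suitable absolute constant $C$, then $t\geq |U|\cdot \frac{n}{Cd|U|}\geq \frac{|U|}{2C}$. This has the wrong dependence on $d$, so the threshold has to be chosen more carefully: I would arrange the cutoff so that in this regime $t\geq c\,d|U|$ follows from $|U|\leq 2n/d$. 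With $|U|\leq 2n/d$, the inequality $t\geq d|U|/15$ needs $t\geq 2n/15$. So the large case will be $t\geq 2n/15$, and it is then immediate.
\item \emph{Small $t$.} If $t< 2n/15$, I need that in a random $d$-regular graph every vertex set $T$ of size $t< 2n/15$ spans at most $\frac{15}{2}t$ edges, or more precisely at most $\frac{15}{2}\cdot\frac{t}{d}\cdot\frac d1$ in the normalized form needed. Since $e(G[T])\geq d|U|/2$, this yields $t\geq d|U|/15$.
\end{itemize}

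The key probabilistic step is the small-$t$ edge-density bound. For this I would use the configuration model together with a first-moment count. The probability that some $t$-set spans at least $at$ edges is at most
\[
\binom{n}{t}\binom{dt}{2at}\Bigl(\frac{dt}{dn-2at}\Bigr)^{at}\;\lesssim\;\Bigl(\frac{en}{t}\Bigl(\frac{e\,t}{2a\,n}\cdot O(d)\Bigr)^{a}\Bigr)^{t}.
\]
This should be summed over $t$ and compared with the probability that the configuration is simple, which is $e^{-O(d^2)}$ and needs care when $d$ is large. Alternatively, the expander mixing lemma with Friedman's $\lambda\leq 2\sqrt{d-1}+o(1)$ gives $e(G[T])\leq \frac{dt^2}{2n}+\frac{\lambda t}{2}$. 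For $t\leq 2n/15$ this yields $\frac{d|U|}{2}\leq \frac{dt}{15}+\sqrt d\,t$, and hence $t\geq \frac{d|U|}{2(d/15+\sqrt d)}$. This is of order $|U|\cdot\min(15/2,\sqrt d/2)$, which is too weak on its own.

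I expect the main obstacle to be the intermediate range of $t$, between about $n/d^2$ and $n/15$. There the first-moment bound for ``$T$ spans $\geq a t$ edges'' only works for $t\lesssim n/d^{1+1/(a-1)}$, while mixing loses a $\sqrt d$ factor. The first thing I would try is to exploit the slack in the constant $15$ together with the bound $|U|\leq 2d^r\leq 2n/d$. In the bad range, $|U|$ is at most a small multiple of $t/d$ times a constant, so I need only a constant-factor density bound $e(G[T])\leq c\, d t$ with $c<1/2$ for sets with $t\leq 2n/15$. That is a linear-size statement which mixing does give: $\frac{dt}{15}+\sqrt d\,t\leq \frac{d t}{7.5}$ once $d\geq 225$. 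It then yields $t\geq \frac{15}{4}|U|$, not $d|U|/15$, so I would combine it with the ball-size bound via a case split on whether $|U|\geq t/d$. Pinning down the constants in that split, or replacing the random graph by an explicit bipartite lossless expander where $|N(S)|\geq (1-\delta)d|S|$ up to $|S|\approx n/d$, is where the real work lies.
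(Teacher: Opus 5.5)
Your outer reduction is exactly the paper's Lemma~\ref{lossless-expanders-dont-admit-good-covers}: every set of radius at most $r$ expands by more than $k$, and you then sum over the cover. The gap is that the expansion property itself is never established, and the statement you reduce the small-$t$ case to is false.

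To get $t \ge d|U|/15$ from $e(G[T]) \ge d|U|/2$, you need $e(G[T]) \le 7.5\,t$ for every $T$ with $|T| < 2n/15$. But averaging over all $t$-sets shows that in \emph{every} $d$-regular graph some $t$-set spans at least $\frac{dt(t-1)}{2(n-1)}$ edges. This exceeds $7.5\,t$ once $t > 1 + 15(n-1)/d$, and that range meets $t < 2n/15$ as soon as $d$ exceeds roughly $113$, which happens for small $r$. So in the window $15n/d < t < 2n/15$ three things fail together:
no upper bound on $e(G[T])$ valid for all $t$-sets (first moment, mixing lemma, or anything else) can force the expansion;
the ball bound $|U| \le 2n/d$ only helps once $t \ge 2n/15$;
your closing case split cannot bridge this, since $e(G[T]) \le c\,dt$ yields only $|U| \le 2ct$.

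The missing idea is to keep the information that \emph{all} edge-ends at the small set land in its neighborhood. You then union-bound directly over pairs $(S,T)$ with every neighbor of $S$ in $T$ and $|T| = \lfloor h|S| \rfloor$.

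The paper does this for the union of $D$ independent uniform perfect matchings of $K_{n/2,n/2}$ with sides $A$, $B$. For fixed $S \subseteq A$ and $T \subseteq B$, the event has probability at most $(|T|/|B|)^{D|S|}$. Against the $\binom{|A|}{|S|}\binom{|B|}{|T|}$ choices, this gives a convergent geometric series for all $|S|$ up to order $n/h$, precisely because $D-h-1 \ge h+1$.

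This model also removes the two technical problems you ran into. There are no edges inside $S$ to account for, since a general set is split into its parts in $A$ and in $B$. And no conditioning on simplicity is needed: the ball bound only uses maximum degree at most $D$. In the configuration model, that conditioning costs a factor $e^{\Theta(d^2)}$, which swamps a plain first-moment bound when $d$ is near $\sqrt n$ (the case $r=1$).

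Your parameters also need repair. With $d = \lfloor n^{1/(r+1)} \rfloor \le n^{1/(r+1)}$, the bound $\sum_i |N^{\le 1}(U_i)| > \frac{d}{15}n$ does not contradict $\sum_i |N^{\le 1}(U_i)| \le \frac{n^{1/(r+1)}}{15}n$. You need expansion strictly larger than $k = n^{1/(r+1)}/15$ itself. The paper sets $h = k$ exactly and uses degree $D = 2\lceil h+1 \rceil$, which is only about $2k$ and far below $n^{1/(r+1)} = 15k$. Then every set of radius at most $r$ has at most $(D+1)^r \le (15h)^r = n/(15h)$ vertices. This lies inside the range $|S| \le \alpha n$, with $\alpha \ge 1/(2e^2h) \ge 1/(15h)$, on which expansion greater than $h$ has been proved. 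The constant $15$ is spent exactly on these two inequalities.
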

This shows that \Cref{main-theorem} is tight up to a constant factor in front of $r$. Stated differently, \Cref{main-theorem,other-direction-theorem} yield the bounds
\begin{equation*}
    n^{(1 + o(1)) / r} \leq k(n, r) \leq n^{(2 + o(1)) / r}
\end{equation*}
for $\omega(1) \leq r \leq o(\log n)$.

\subsection{Notation and definitions}

All logarithms in this paper are base $2$.
For a positive integer $n$, we denote $\{1, \dots, n\}$ by $[n]$.
By a graph, we mean a finite simple undirected graph.
For a graph $G$, we denote its set of vertices and edges by $V(G), E(G)$, respectively. For $U,V \subseteq V(G)$, $\dist(U,V)$ will denote the shortest path distance from any $u \in U$ to any $v \in V$. By convention $\dist(U,V) = \infty$ if no such path exists.
For $U \subseteq V(G)$, the subgraph induced by $U$ is denoted $G[U]$.
For any $U \subseteq V(G)$ and any $r \geq 0$, define
\begin{equation*}
    N^{\leq r}(U) \coloneqq \{v \in V(G) : \dist(U, \{v\}) \leq r\}.
\end{equation*}
For $u \in V(G)$, we write $N^{\leq r}(u) \coloneqq N^{\leq r}(\{u\})$.
\begin{definition}
    The \define{radius} of a non-empty connected graph $G$ is the minimum $r \geq 0$ such that there exists a $v \in V(G)$ with $N^{\leq r}(v) = V(G)$.
    Further, we set the radius of the empty graph to be $0$.
\end{definition}

\begin{definition}\label{r-k-cover}
    For a graph $G$ and real numbers $r, k$, we say that $U_1, \dots, U_m \subseteq V(G)$ form an \define{$(r, k)$-cover} for $G$ if
    \begin{enumerate}
        \item $U_1, \dots, U_m$ \define{covers} $V(G)$: $V(G) = \bigcup_{i=1}^m U_i$,\label{r-k-cover-is-a-cover}
        \item $U_1, \dots, U_m$ is \define{$r$-local}: $G[U_i]$ is connected with radius at most $r$ for all $1 \leq i \leq m$,\label{r-k-cover-has-covering-sets-of-radius-r}
        \item $U_1, \dots, U_m$ is \define{$k$-efficient}: $\sum_{i=1}^m |N^{\leq 1}(U_i)| \leq k |V(G)|$.\label{r-k-cover-is-small}
    \end{enumerate}
\end{definition}

\section{Construction and analysis of nearly optimal $(r, k)$-covers}\label{main-thm-proof-section}

We use a clustering algorithm of Miller, Peng, and Xu \cite{milleralgo}. Miller, Peng, Vladu, and Xu \cite{mpvx} subsequently proved a local intersection estimate for this clustering that controls the number of clusters meeting a metric ball. The proof below is an adaptation of these arguments.

\covering*
\begin{proof}
    The theorem is trivial for $n = 1$, so we assume $n \geq 2$.
    By applying the theorem to each connected component, it suffices to consider the case that $G$ is connected.
    Let us first describe the algorithm for constructing the cover.
    We sample independent identically distributed exponential random variables $H_x$ ($x \in V(G)$) parameterized by $\theta \coloneqq (3n)^{-1 / r}$, meaning
    \begin{equation*}
        \IP(H_x \geq t) = \theta^t
    \end{equation*}
    for any $t \geq 0$.
    Set $S_x(v) \coloneqq H_x - \dist(v, x)$ for $x, v \in V(G)$, and define
    \begin{equation*}
        U_x \coloneqq \left\{v \in V(G) \mmid S_x(v) = \max_{y \in V(G)} S_y(v)\right\}.
    \end{equation*}
    Then $\{U_x \mid x \in V(G)\}$ is indeed a cover for $G$ since, for each $v \in V(G)$, the maximum $\max_{y \in V(G)} S_y(v)$ is attained by some value of $y \in V(G)$. It is in fact a partition almost surely as these maxima are almost surely attained by a unique maximizer.
    We also have that the event
    \begin{equation*}
        \E \coloneqq \{H_x \leq r \quad \forall x \in V(G)\}
    \end{equation*}
    holds with probability
    \begin{equation*}
        \IP(\E) \geq 1 - \sum_{x \in V(G)} \IP(H_x > r) = 1 - n \theta^r = 1 - n \frac{1}{3n} = \frac{2}{3}.
    \end{equation*}
    We claim that if $\E$ holds, then the cover $\{U_x \mid x \in V(G)\}$ is $r$-local. Suppose $\E$ holds and let $x \in V(G)$.
    If $U_x$ is empty then it has radius $0 \leq r$.
    Now suppose $U_x$ is non-empty and let $v \in U_x$ be arbitrary.
    Then
    \begin{equation*}
        H_x - \dist(v, x) = S_x(v) \geq S_v(v) = H_v \geq 0,
    \end{equation*}
    which implies $\dist(v, x) \leq H_x \leq r$ by $\E$.
    Therefore, there exists a shortest $v$-$x$-path $P$ in $G$ of length at most $r$.
    We claim that $V(P) \subseteq U_x$.
    Indeed, for $w \in V(P)$, we have $\dist(v, x) = \dist(v, w) + \dist(w, x)$ since $P$ is a shortest path in $G$. This implies that
    \begin{equation*}
        S_x(w) = S_x(v) + \dist(v, w) = \max_{y \in V(G)} S_y(v) + \dist(v, w) \geq \max_{y \in V(G)} S_y(w),
    \end{equation*}
    so $w \in U_x$.
    Thus, $P$ is a $v$-$x$-path in $G[U_x]$ of length at most $r$, so $x \in U_x$ and $\dist_{G[U_x]}(v, x) \leq r$.
    We have therefore shown that if $\E$ holds, then the cover $\{U_x \mid x \in V(G)\}$ is $r$-local.

    In the remainder of the proof we bound the expected efficiency of the cover $\{U_x \mid x \in V(G)\}$ to show that it is efficient for some outcome satisfying $\E$.
    To that end, we count the number of times a vertex $v \in V(G)$ appears in $N^{\leq 1}(U_x)$ for some $x \in V(G)$. We rewrite
    \begin{equation}\label{double-counting-efficiency-sum}
        \sum_{x \in V(G)} |N^{\leq 1}(U_x)| = \sum_{v \in V(G)} \sum_{x \in V(G)} 1_{N^{\leq 1}(v) \cap U_x \neq \emptyset}.
    \end{equation}
    Suppose now that the event $\{N^{\leq 1}(v) \cap U_x \neq \emptyset\}$ holds. Then there exists a $w \in N^{\leq 1}(v) \cap U_x$. 
    Then by $\dist(v, w) \leq 1$, we get $|S_y(v) - S_y(w)| \leq 1$ for any $y \in V(G)$.
    Therefore,
    \begin{equation*}
        S_x(v) \geq S_x(w) - 1 = \max_{y \in V(G)} S_y(w) - 1 \geq \max_{y \in V(G)} S_y(v) - 2 \geq \max_{y \in V(G) \backslash \{x\}} S_y(v) - 2.
    \end{equation*}
    We conclude that
    \begin{equation*}
        \{N^{\leq 1}(v) \cap U_x \neq \emptyset\} \subseteq \set{S_x(v) \geq \max_{y \in V(G) \backslash \{x\}} S_y(v) - 2}.
    \end{equation*}
    Therefore, for every $v \in V(G)$,
    \begin{align*}
        \IE \left(\sum_{x \in V(G)} 1_{N^{\leq 1}(v) \cap U_x \neq \emptyset}\right)
        &\leq \sum_{x \in V(G)} \IP\left(S_x(v) \geq \max_{y \in V(G) \backslash \{x\}} S_y(v) - 2\right)\\
        &\leq \frac{1}{\theta^2} \sum_{x \in V(G)} \IP\left(S_x(v) > \max_{y \in V(G) \backslash \{x\}} S_y(v)\right),
    \end{align*}
    where we used the independence of $H_x$ and $(H_y)_{y \in V(G) \backslash \{x\}}$, and the memorylessness property of the exponential distribution: For any $t \in \IR$ and $x \in V(G)$,
    \begin{equation*}
        \IP(H_x > t) = \theta^{\max\{0, t\}} \geq \theta^2 \theta^{\max\{0, t-2\}} = \theta^2 \IP(H_x \geq t - 2).
    \end{equation*}
    For each fixed $v \in V(G)$ and any outcome, there exists at most one $x \in V(G)$ with $S_x(v) > \max_{y \in V(G) \backslash \{x\}} S_y(v)$.
    Therefore, with \eqref{double-counting-efficiency-sum}, we obtain
    \begin{equation*}
        \IE\left(\sum_{x \in V(G)} |N^{\leq 1}(U_x)|\right) \leq \frac{1}{\theta^2} \sum_{v \in V(G)} \sum_{x \in V(G)} \IP\left(S_x(v) > \max_{y \in V(G) \backslash \{x\}} S_y(v)\right) \leq \frac{n}{\theta^2}.
    \end{equation*}
    Then by Markov's inequality,
    \begin{equation*}
        \IP\left(\sum_{x \in V(G)} |N^{\leq 1}(U_x)| \leq \frac{2n}{\theta^2}\right) \geq \frac{1}{2}.
    \end{equation*}
    Since $\IP(\E) \geq \frac{2}{3} > \frac{1}{2}$, there exists an outcome for which $\E$ holds and $\sum_{x \in V(G)} |N^{\leq 1}(U_x)| \leq \frac{2n}{\theta^2}$ and for which every maximum $\max_x S_x(v)$ is uniquely attained.
    For that outcome, $\{U_x \mid x \in V(G), U_x \neq \emptyset\}$ is a partition of $V(G)$ and an $(r, 2/\theta^2) = (r, 2 (3n)^{2 / r})$-cover for $G$, as required.
\end{proof}

\subsection{Tightness of the analysis}\label{non-optimality-of-algo}

\Cref{main-theorem} is an existential result, but its proof uses a concrete randomized algorithm. It is natural to ask whether this algorithm is instance-adaptive: when a graph admits much better local efficient covers, does the randomized construction typically find one? The answer is no. The examples below show that the two estimates in the proof of \Cref{main-theorem} are essentially tight for the randomized construction itself: paths witness tightness of the radius estimate, while cliques witness tightness of the efficiency estimate up to a factor of $2$ in the exponent.

\begin{proposition}
    Let $G$ be a path with vertex set $[n]$, let $2 \le r \leq n^{o(1)}$, and let $C > 0$ be a fixed constant. Then with high probability the above construction with exponential parameter $\theta = (Cn)^{-1/r}$ yields a cluster $U_x$ $(x \in [n])$ so that $G[U_x]$ has radius at least $\lfloor (1-o(1))r \rfloor$.
\end{proposition}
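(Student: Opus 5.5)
Since $\theta^t = e^{-t\ln(1/\theta)}$, the $H_x$ are exponential with rate $\lambda \coloneqq \ln(1/\theta) = \frac{\ln(Cn)}{r}$. The plan is to find, with high probability, a vertex $x$ that is far from both endpoints and has a very large shift $H_x$. We then show that no competitor can capture the vertices of the path near $x$, so $U_x$ contains a long interval centred at $x$.

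\textbf{Step 1: one very large shift.} Fix $\delta = \delta(n) \to 0$, to be tuned at the end. Let $I$ be the middle $n/2$ vertices of $[n]$. The maximum of the $H_x$ over $x \in I$ is $\frac{1}{\lambda}(\ln(n/2) + O_P(1))$, which equals $r\bigl(1 - O(\tfrac{1}{\ln n})\bigr) - O_P(r/\ln n)$. Because $r \le n^{o(1)}$, we have $\ln n = \omega(1)$. Hence with high probability some $x \in I$ has
\[
H_x \ge (1-\delta) r .
\]
This holds provided $\delta \ln n / r \to \infty$ when $r = O(\ln n)$, and $\delta \ln n \to \infty$ in general.

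\textbf{Step 2: no competitor near $x$.} We need to rule out any $y$ with $H_y - |v - y| \ge H_x - |v - x|$ for some $v$ with $|v - x| \le (1-2\delta)r$. For this it suffices that
\[
H_y \ge H_x - |v - x| + |v - y| \ge \delta r + |x - y| - 2|v - x| .
\]
A cleaner sufficient condition uses the triangle inequality $|v - y| \ge |x - y| - |v - x|$. The paper shows that $U_x$ is a union of geodesics to $x$, so on a path it is an interval containing $x$. Therefore it suffices to protect the two endpoints $v = x \pm m$, where $m = \lfloor (1-2\delta) r \rfloor$. The failure event for $v$ is that $\max_y (H_y - |v - y|) \ge H_x - m \ge \delta r$.

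To bound this, condition on the location $x$ (the argmax over $I$) and use independence of the other shifts. The probability that some $y$ has $H_y \ge \delta r + |v - y|$ is at most
\[
\sum_{d \ge 0} 2\,\theta^{\delta r + d} \;\le\; \frac{2\,(Cn)^{-\delta}}{1 - \theta},
\]
with $1 - \theta \approx \lambda$. This tends to $0$ if $n^{\delta}\lambda \to \infty$.

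Conditioning on $x$ being the argmax only pushes the other shifts down, which helps. To avoid that subtlety I would instead union bound: over all $x$ with $H_x \ge (1-\delta)r$ (in expectation $n^{\delta}$-ish of them), together with the event that some $y$ at distance $d$ from the relevant $v$ satisfies $H_y \ge \delta r + d$.

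\textbf{Main obstacle: balancing the parameters.} This is delicate because $\lambda$ can be tiny when $r$ is large. Independence of $H_x$ from the $H_y$ for $y \ne x$ lets the failure probability factor as
\[
\Pr\bigl(H_x \ge (1-\delta) r\bigr)\cdot O\bigl(n^{-\delta}/\lambda\bigr),
\]
and summing over $x$ gives
\[
O\!\left(\frac{n\,(Cn)^{-(1-\delta)}\, n^{-\delta}}{\lambda}\right) = O\!\left(\frac{1}{C\lambda}\right).
\]
That is not small, so a naive union bound fails. The fix is to count instead the expected number of \emph{good} $x$ (large $H_x$ and no competitor), and show via second moment or independence across well-separated blocks that some good $x$ exists.

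Concretely, split $I$ into blocks of length $L = \lceil r/\lambda \rceil\, n^{o(1)}$. Declare a block good if its centre-region vertex $x$ has $H_x \ge (1-\delta) r$ and all shifts in the block satisfy the competitor bound. Competitors farther than $L$ away are handled by a global union bound, since for them $\theta^{d}$ is already tiny. Blocks are then independent, and each is good with probability about $L\,n^{-(1-\delta)}/C$. The number of blocks is $n/(2L)$, and $r \le n^{o(1)}$ makes the count of good blocks tend to infinity once $\delta = \delta(n) \to 0$ slowly. Then $G[U_x]$ contains $[x - m, x + m]$, so its radius is at least $m = \lfloor (1 - o(1)) r \rfloor$.
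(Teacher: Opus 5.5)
Your Steps 1 and 2, read together with your own remark that conditioning on the argmax ``only pushes the other shifts down'', already form a complete proof, and it is essentially the paper's. That conditioning is fully rigorous. Given the location $x$ and value $M$ of the maximum over $I$, the other shifts are independent, those in $I$ are distributed as $H_y$ conditioned on $H_y<M$, and
\[
\Pr(H_y\ge t\mid H_y<M)=\frac{\max\{0,\theta^{t}-\theta^{M}\}}{1-\theta^{M}}\le\theta^{t},
\]
which is exactly the inequality the paper uses. So on $\{M\ge(1-\delta)r\}$ your geometric-series bound gives failure probability at most $4\theta^{\delta r}/(1-\theta)\le 4(Cn)^{-\delta}\bigl(1+r/\ln(Cn)\bigr)$, uniformly in $(x,M)$. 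This is $o(1)$ for the paper's choice $\delta=\sqrt{\log r/\log n}$, because $r\le n^{o(1)}$.

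The differences from the paper are minor. The paper conditions on the \emph{global} argmax $x_0$ and gets distance from the endpoints because $x_0$ is uniform. It beats every $y$ with $|y-x_0|>2r$ deterministically, since $H_y<M$ and $y$ is farther from $v$ than $x_0$. That leaves only a union bound over $O(r)$ nearby competitors. You instead restrict to the middle half and absorb all competitors into the geometric series, paying $1/(1-\theta)$ in place of $O(r)$.

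Consequently your ``main obstacle'' is not real, and the block construction is unnecessary. The union bound giving $O(1/(C\lambda))$ controls the probability that \emph{some} vertex with $H_x\ge(1-\delta)r$ has a competitor. That is far more than you need: you only need one large-shift vertex, such as the argmax, to be competitor-free. Dropping the conditioning is what created the difficulty.

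The block argument can be made to work. Independence of disjoint blocks replaces the conditioning. Far competitors are harmless because $\max_y H_y<2r$ with probability at least $1-n(Cn)^{-2}$, so blocks of length $12r$ with $x$ in the middle third suffice for them. As written, however, its ingredients are not pinned down: which ``centre-region vertex'', the choice $L=\lceil r/\lambda\rceil n^{o(1)}$, and the ``global union bound''. It also gains nothing over the conditioned argument.

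Finally, in Step 1 the only requirement is $n^{\delta}\to\infty$, since $\Pr(\max_{I}H<(1-\delta)r)\le\exp\bigl(-\tfrac n2(Cn)^{-(1-\delta)}\bigr)=\exp(-\Omega(n^{\delta}))$. Your extra condition $\delta\ln n/r\to\infty$ for $r=O(\ln n)$ is not needed. It also could not hold with $\delta\to0$ when $r=\Theta(\ln n)$.
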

\begin{proof}
    Set $\eps \coloneqq \sqrt{(\log r)/\log n}$ and note that $\epsilon = o(1)$. Denote $M \coloneqq \max_{x \in [n]} H_x$, and choose $x_0 \in [n]$ with $H_{x_0} = M$.
    Almost surely, there is a unique choice for $x_0$.
    Since $\theta^r = C^{-1} n^{-1}$ and $\epsilon = \Omega((\log n)^{-1 / 2})$,
    \begin{equation*}
        \IP(M < (1-\epsilon)r)
        =
        (1-\theta^{(1-\epsilon)r})^n
        \leq
        \exp(-n\theta^{(1-\epsilon)r})
        =
        \exp(-\Omega(n^\epsilon))
        =
        o(1).
    \end{equation*}
    Note also that since $x_0$ is uniformly distributed on $[n]$,
    \begin{equation*}
        \IP(\dist(x_0,\{1,n\}) < r) \leq \frac{2r}{n} = o(1).
    \end{equation*}
    With high probability we may thus condition on $x_0$ being at least $r$ away from the endpoints and $M \ge (1-\eps)r$. Conditioned on $x_0$ and $M$, the variables $(H_y)_{y \neq x_0}$ are distributed as independent copies of $H_y$ conditioned to be less than $M$.
    For sufficiently large $n$,
    \begin{equation*}
        \IP\bigp{H_y \geq \epsilon r \mid H_y < M} = \frac{\IP\bigp{\eps r \le H_y < M}}{\IP\bigp{H_y < M}} = \frac{\max\{0, \theta^{\epsilon r} - \theta^M\}}{1-\theta^M} \leq \theta^{\epsilon r}.
    \end{equation*}
    By a union bound,
    \begin{equation*}
        \IP(\exists y \neq x_0 : \dist(y,x_0) \leq 2r,\ H_y \geq \epsilon r)
        \leq
        O(r)\theta^{\epsilon r}
        =
        O(rn^{-\epsilon})
        =
        o(1),
    \end{equation*}
    using the definition of $\eps$.

   Condition on the complement of this event, and fix an arbitrary $v \in N^{\le \floor{(1-2\eps)r}}(x_0)$. To lower bound the radius of $U_{x_0}$ by $\floor{(1 - 2\eps)r}$ it suffices to show $v \in U_{x_0}$.
   Consider any $y \in [n]$ with $y \neq x_0$. If $\dist(y,x_0) \le 2r$, then
   $$
   S_y(v) < \eps r - \dist(y,v) \le \eps r \le (1-\eps)r - \floor{(1-2\eps)r} \le S_{x_0}(v).
   $$
   If $\dist(y,x_0) > 2r$, then
   $$
   S_y(v) < M - \dist(y,v) < M - \dist(x_0,v) = S_{x_0}(v).
   $$
   In either case $S_y(v) < S_{x_0}(v)$. Therefore, $S_{x_0}(v) = \max_y S_y (v)$, so $v \in U_{x_0}$, as desired.
\end{proof}

The following standard fact from probability can be derived from the order statistics of exponential random variables \cite{exponential-order-statistics}.

\begin{fact}\label{fact:orderstats}
    Let $0 < \theta < 1$ and let $X_1,\dots, X_m$ be i.i.d. exponential random variables with $\prob{X_i \geq t} = \theta^t$ for $t \geq 0$.
    If $W_m$ denotes the number of $i \in [m]$ such that $X_i \ge \max_j X_j - 1$, then $W_m \in [m]$ is a truncated geometric random variable with
    $$
    \prob{W_m \ge k} = (1-\theta)^{k-1}, \quad 1 \le k \le m.
    $$
\end{fact}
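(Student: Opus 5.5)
We will prove Fact~\ref{fact:orderstats} using the Rényi representation of exponential order statistics. Write $\lambda \coloneqq \ln(1/\theta) > 0$, so that $\prob{X_i \geq t} = e^{-\lambda t}$ and each $X_i$ is exponential with rate $\lambda$. Let $X_{(1)} \geq X_{(2)} \geq \dots \geq X_{(m)}$ be the decreasing order statistics; almost surely they are distinct. Then $W_m$ is the number of indices $k$ with $X_{(k)} \geq X_{(1)} - 1$. Since the order statistics are decreasing, for $1 \le k \le m$ we have
\[
\{W_m \geq k\} = \{X_{(1)} - X_{(k)} \leq 1\}.
\]
So it suffices to compute the distribution of the gap $X_{(1)} - X_{(k)}$.

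We invoke the standard representation of the spacings. The gaps $D_j \coloneqq X_{(j)} - X_{(j+1)}$ for $j = 1, \dots, m-1$, together with $X_{(m)}$, are independent, and $D_j$ is exponential with rate $j\lambda$. This follows from memorylessness. Above level $X_{(j+1)}$ exactly $j$ variables remain, their overshoots are i.i.d.\ exponential with rate $\lambda$, and the largest overshoot among $j$ of them is therefore the $j$th spacing from the top. The cleaner route is to look at the top: the gaps from the maximum to the others are distributed as the order statistics of $m-1$ i.i.d.\ exponentials, with the top spacing having the minimum rate. Equivalently, $X_{(1)} - X_{(k)} = D_1 + \dots + D_{k-1}$, with the $D_j$ independent and $D_j \sim \mathrm{Exp}(j\lambda)$.

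The key step, and the one requiring care, is evaluating
\[
\prob{D_1 + \cdots + D_{k-1} \leq 1} = (1-e^{-\lambda})^{k-1} = (1-\theta)^{k-1}.
\]
We will use the classical identity that a sum of independent $\mathrm{Exp}(j\lambda)$ variables for $j = 1, \dots, k-1$ has the same law as the maximum of $k-1$ i.i.d.\ $\mathrm{Exp}(\lambda)$ variables. This is again the Rényi representation, applied to $k-1$ variables: their maximum equals the sum of the spacings, which are independent $\mathrm{Exp}(j\lambda)$. Hence
\[
\prob{\textstyle\sum_{j<k} D_j \leq 1} = \prob{\mathrm{Exp}(\lambda) \leq 1}^{k-1} = (1-\theta)^{k-1}.
\]
For $k = 1$ the sum is empty, giving probability $1$, which is consistent with the formula. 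Finally, $W_m \ge 1$ always and $W_m \le m$, so $W_m \in [m]$ and its tail is exactly that of a geometric variable truncated at $m$.

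The only genuine obstacle is justifying the spacing representation. If we prefer not to cite \cite{exponential-order-statistics}, we can verify it directly by the change of variables on the joint density $m!\prod_i \lambda e^{-\lambda x_{(i)}}$ of the order statistics. The exponent rewrites as $\sum_i x_{(i)} = m\,x_{(m)} + \sum_j j\,d_j$, which factorizes into independent exponentials with rates $m\lambda$ and $j\lambda$.
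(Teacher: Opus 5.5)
Your proof is correct and follows the route the paper has in mind: the paper gives no argument beyond pointing to the order statistics of exponential random variables, and your Rényi spacing representation $X_{(1)}-X_{(k)} = D_1+\dots+D_{k-1}$ with independent $D_j \sim \mathrm{Exp}(j\lambda)$, combined with the fact that this sum has the law of the maximum of $k-1$ i.i.d.\ $\mathrm{Exp}(\lambda)$ variables, is exactly that derivation. One small caution: the heuristic sentences in your second paragraph are misstated, since it is the \emph{smallest} overshoot above $X_{(j+1)}$ that equals $D_j$, and it is the gaps from the \emph{minimum}, not the maximum, that are distributed as the order statistics of $m-1$ i.i.d.\ exponentials; this is harmless, because the density computation in your last paragraph correctly establishes the representation you actually use.
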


\begin{proposition}
    Let $r \geq 2$ and let $G$ be the disjoint union of $m \geq 2$ cliques each with $m$ vertices, so $G$ has $n = m^2$ vertices. Then with high probability the above construction with exponential parameter $0 < \theta \le n^{-1/r}$ yields a cover satisfying $\sum_{x \in V(G)} |N^{\leq 1}(U_x)| \geq \Omega(n^{1 + 1 / r})$.
\end{proposition}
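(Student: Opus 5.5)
The plan is to compute exactly what the construction does on a single clique and then sum over the $m$ independent cliques. Distinct vertices of a clique are at distance $1$, and vertices in different cliques are at distance $\infty$. So for $v$ in a clique $K$ we have $S_v(v) = H_v$ and $S_x(v) = H_x - 1$ for $x \in K\setminus\{v\}$, while $S_x(v) = -\infty$ for $x \notin K$. Two consequences follow:
\begin{itemize}
  \item every $U_x$ is contained in the clique of $x$;
  \item whenever $U_x \neq \emptyset$ we have $N^{\leq 1}(U_x) = K$, since a single vertex of a clique already has the whole clique as its closed neighbourhood.
\end{itemize}
Hence $\sum_{x} |N^{\leq 1}(U_x)| = m \sum_{K} Z_K$, where $Z_K$ is the number of $x \in K$ with $U_x \neq \emptyset$.

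The key step is the lower bound $Z_K \geq W^{(K)}$, where $W^{(K)}$ is the number of $x \in K$ with $H_x \geq \max_{j \in K} H_j - 1$. Indeed, if $x$ satisfies this, then $S_x(x) = H_x \geq H_y - 1 = S_y(x)$ for every $y \in K\setminus\{x\}$, so $x \in U_x$. By \Cref{fact:orderstats}, $W^{(K)}$ has the truncated geometric law $\prob{W^{(K)} \geq k} = (1-\theta)^{k-1}$ for $k \in [m]$. The variables $W^{(K)}$ are i.i.d. over the $m$ cliques because they depend on disjoint sets of $H$'s. Summing the tail gives
\[
\mu \coloneqq \IE W^{(K)} = \frac{1-(1-\theta)^m}{\theta} \geq \frac{1 - e^{-m\theta}}{\theta}.
\]
I then split into two cases. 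If $m\theta \geq 1$, this is at least $(1-e^{-1})/\theta \geq (1-e^{-1}) n^{1/r}$, using $\theta \leq n^{-1/r}$. If $m\theta < 1$, then $1-e^{-t} \geq (1-e^{-1})t$ for $t\in[0,1]$ gives $\mu \geq (1-e^{-1}) m$. Since $m = n^{1/2} \geq n^{1/r}$ for $r \geq 2$, in both cases $\mu \geq c\, n^{1/r}$ with $c = 1-e^{-1}$.

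The remaining step is concentration, so that the bound holds with high probability and not just in expectation. I will use Chebyshev on $\sum_K W^{(K)}$, a sum of $m$ i.i.d. terms. The main point is to check $\mathrm{Var}\, W^{(K)} \leq C\mu^2$ for an absolute constant $C$. For this, the geometric tail gives $\IE (W^{(K)})^2 \leq \sum_{k \leq m} (2k-1)(1-\theta)^{k-1} \leq \min\{m^2,\, 2/\theta^2\}$. By the same two-case comparison, this is $O(\min\{m,1/\theta\}^2) = O(\mu^2)$. Then
\[
\prob{\textstyle\sum_K W^{(K)} < m\mu/2} \leq \frac{4 C}{m} \to 0 \quad \text{as } n \to \infty.
\]
On the complementary event,
\[
\sum_x |N^{\leq 1}(U_x)| \geq m \cdot \frac{m\mu}{2} \geq \frac{c}{2}\, n^{1+1/r},
\]
which is the claimed bound.
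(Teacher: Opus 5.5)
Your proof is correct. It shares the paper's skeleton: clusters stay inside their clique, $N^{\leq 1}(U_x)$ is the whole clique whenever $U_x \neq \emptyset$, and \Cref{fact:orderstats} governs the per-clique number of nonempty clusters. Where it differs is the concentration step.

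The paper fixes the threshold $\lfloor m^{2/r} \rfloor$ and uses the Fact once to get $\IP(W_i \geq \lfloor m^{2/r} \rfloor) \geq 1/4$. It then concentrates the binomial count of cliques that reach this threshold.

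You instead compute the first two moments of the truncated geometric variable: $\mu = (1-(1-\theta)^m)/\theta = \Theta(\min\{m, 1/\theta\})$ and $\IE (W^{(K)})^2 \leq \min\{m^2, 2/\theta^2\} = O(\mu^2)$. You then apply Chebyshev to $\sum_K W^{(K)}$.

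The paper's route needs no moment computation. A Chernoff bound on its binomial count would give failure probability $e^{-\Omega(m)}$, rather than your $O(1/m)$.

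Your route yields explicit constants. It also directly shows that the sum is $\Omega(n \min\{\sqrt{n}, 1/\theta\})$, which is strictly more than $n^{1+1/r}$ when $\theta \ll n^{-1/r}$. The paper's argument could recover this too by moving its threshold to $\min\{m, \lfloor 1/\theta \rfloor\}$.
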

\begin{proof}
    Let the cliques be $C_1,\dots,C_m$ and for every $i \in [m]$ let $W_i$ denote the number of $x \in V(C_i)$ such that $U_x \neq \emptyset$ after running the partitioning algorithm.
    For $x \in V(C_i)$, we have that $U_x \neq \emptyset$ if and only if $H_x \geq \max_{y \in V(C_i)} H_y - 1$, and then $N^{\leq 1}(U_x) = V(C_i)$.
    Then by \Cref{fact:orderstats} and by $\theta \leq m^{-2 / r}$,
    $$
    \prob{W_i \ge \floor{m^{2/r}}} = (1-\theta)^{\floor{m^{2/r}} - 1} \ge \left(1-m^{-2/r}\right)^{m^{2/r}} \geq \frac{1}{4} = \Omega(1)
    $$
    if $m^{2 / r} \geq 2$. If $m^{2 / r} < 2$, then $\prob{W_i \ge \floor{m^{2/r}}} = \prob{W_i \geq 1} = 1 = \Omega(1)$. Therefore, $\prob{W_i \geq \floor{m^{2 / r}}} = \Omega(1)$ holds in any case.
    Thus, the number of $i$ such that $W_i \ge \floor{m^{2/r}}$ is a binomial random variable with expectation $\Omega(m)$. As a result, the number of such $i$ is $\Omega(m)$ with high probability.
    On that event,
    \begin{equation*}
        \sum_{x \in V(G)} |N^{\leq 1}(U_x)| \geq \sum_{\substack{i \in [m] \\ W_i \geq \floor{m^{2 / r}}}} m W_i \geq \Omega(m^2 \floor{m^{2 / r}}) = \Omega(m^{2 ( 1 + 1 / r)}).\qedhere
    \end{equation*}
\end{proof}

Now let $n = 2 m^2$. Taking the disjoint union of $m$ cliques of size $m$ with a path on $n / 2 = m^2$ vertices gives an $n$-vertex graph such that with high probability the algorithm in the proof of \Cref{main-theorem} yields no better than a $(\floor{(1 - o(1))r},\Omega(n^{1/(r+1)}))$-cover for any $2 \le r \le n^{o(1)}$, while there trivially exists a $(O(1),O(1))$-cover.

\section{Lower bound from vertex expansion}\label{other-direction-thm-proof-section}

We show that graphs with strong expansion do not admit good $(r, k)$-covers.

\begin{definition}
    For an $n$-vertex graph $G$ and real numbers $h \geq 1$ and $\alpha \in (0, 1)$, we say $G$ is an \textit{$(\alpha, h)$-expander} if
    \begin{equation*}
        |N^{\leq 1}(W)| > h |W|
    \end{equation*}
    for every non-empty $W \subseteq V(G)$ with $|W| \leq \alpha n$.
\end{definition}

\begin{lemma}\label{lossless-expanders-dont-admit-good-covers}
    Suppose $n \geq 1$ and $G$ is an $n$-vertex $(\alpha, h)$-expander with maximum degree at most $D$.
    Then for any positive integer $r$ with $(D+1)^r \leq \alpha n$, $G$ admits no $(r, h)$-cover.
\end{lemma}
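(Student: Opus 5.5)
Suppose for contradiction that $U_1,\dots,U_m$ is an $(r,h)$-cover of $G$. We may discard empty sets, since they contribute nothing to either the covering condition or the efficiency sum, so assume every $U_i$ is non-empty. The plan is to show that each piece individually satisfies $|N^{\leq 1}(U_i)| > h|U_i|$, and then sum over $i$ to contradict $h$-efficiency.

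\textbf{Step 1: each piece is small.} Fix $i$. Since $G[U_i]$ is connected with radius at most $r$, there is a center $c\in U_i$ such that $U_i\subseteq N^{\leq r}_{G[U_i]}(c)\subseteq N^{\leq r}(c)$. With maximum degree at most $D$, an induction on $r$ gives $|N^{\leq r}(c)|\leq (D+1)^r$: each vertex of $N^{\leq j}(c)$ contributes itself and at most $D$ neighbours to $N^{\leq j+1}(c)$. Hence $1\leq|U_i|\leq (D+1)^r\leq \alpha n$.

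\textbf{Step 2: apply expansion.} Because $U_i$ is non-empty with $|U_i|\leq\alpha n$, the $(\alpha,h)$-expander property gives $|N^{\leq 1}(U_i)|>h|U_i|$.

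\textbf{Step 3: sum.} Summing over $i$ and using that the $U_i$ cover $V(G)$,
\begin{equation*}
\sum_i |N^{\leq 1}(U_i)| > h\sum_i |U_i| \geq h\,\Bigl|\bigcup_i U_i\Bigr| = hn.
\end{equation*}
The inequality is strict because $m\geq 1$ when $n\geq 1$. This contradicts $h$-efficiency, which requires $\sum_i |N^{\leq 1}(U_i)|\leq hn$, so no $(r,h)$-cover exists.

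The argument has no real obstacle. The only points needing care are the ball-size bound $(D+1)^r$ in Step 1, which is a crude but sufficient estimate, and the removal of empty pieces before Step 2, since the expansion hypothesis applies only to non-empty sets.
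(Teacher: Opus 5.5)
Your proof is correct and follows essentially the same route as the paper. Each non-empty piece lies in a ball $N^{\leq r}(c)$ of size at most $(D+1)^r \leq \alpha n$, so expansion gives $|N^{\leq 1}(U_i)| > h|U_i|$, and summing contradicts $h$-efficiency; your explicit removal of empty pieces and the induction for the ball-size bound just spell out details the paper handles more briefly.
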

\begin{proof}
    Suppose for the sake of contradiction that $U_1, \dots, U_m$ is an $(r, h)$-cover for $G$.
    For $1 \leq i \leq m$, we have that $G[U_i]$ has radius at most $r$.
    Therefore, $|U_i| = 0$ or there exists a $v \in U_i$ with $U_i = N^{\leq r}_{G[U_i]}(v) \subseteq N^{\leq r}(v)$, which implies
    \begin{equation*}
        |U_i| \leq |N^{\leq r}(v)| \leq (D + 1)^r \leq \alpha n.
    \end{equation*}
    Since $G$ is an $n$-vertex $(\alpha, h)$-expander, it follows that $|N^{\leq 1}(U_i)| > h |U_i|$ if $|U_i| > 0$.
    Consequently,
    \begin{equation*}
        h n \geq \sum_{i = 1}^m |N^{\leq 1}(U_i)| > h \sum_{i = 1}^m |U_i| \geq h n
    \end{equation*}
    since $U_1, \dots, U_m$ cover $G$, a contradiction.
\end{proof}

The previous lemma shows that graphs with expansion $h$ large relative to the maximum degree $D$ do not admit local efficient covers. If $h$ is very close to $D$, such graphs are often called lossless expanders; see the survey of Hoory, Linial, and Wigderson~\cite{expander-survey} for more background on expander graphs. Hsieh, Lubotzky, Mohanty, Reiner, and Zhang~\cite{explicit-lossless-vertex-expanders} recently gave the first construction of explicit lossless expanders. For our purposes, however, the following standard probabilistic construction suffices; we include the calculation after proving \Cref{other-direction-theorem}. 
\begin{lemma}\label{existence-of-lossless-expander}
    Let $D$ be a natural number, let $1 \leq h < D - 1$ be a real number, and set
    \begin{equation*}
        \alpha \coloneqq
            \frac{1}{2h} \left(\frac{1}{4 h e^{h + 1}}\right)^{1 / (D - h - 1)}.
    \end{equation*}
    Then for any natural number $n$, it holds with positive probability that the union of $D$ independent uniformly random perfect matchings in $K_{n, n}$ is a $2n$-vertex $(\alpha, h)$-expander with maximum degree at most $D$.
\end{lemma}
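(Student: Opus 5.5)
We prove Lemma \ref{existence-of-lossless-expander} by a first-moment (union bound) argument over small sets that fail to expand. Let $G$ be the union of $D$ independent uniformly random perfect matchings $M_1,\dots,M_D$ of $K_{n,n}$, with sides $L$ and $R$. The maximum degree is at most $D$ deterministically, so only expansion needs attention. If $G$ is not an $(\alpha,h)$-expander, there is a non-empty $W$ with $|W| = s \le 2\alpha n$ and $|N^{\le 1}(W)| \le hs$. Write $W = W_L \cup W_R$ with $|W_L| = a$ and $|W_R| = b$. Since $N^{\le 1}(W)\supseteq W$, the neighbourhood $N(W_L)\subseteq R$ lies inside some set $T_R\subseteq R$ of size at most $hs$, and similarly $N(W_R)$ lies inside some $T_L \subseteq L$ with $|T_L|+|T_R|\le hs$. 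It is cleanest to note that the bad event forces the existence of sets $W_L, W_R, T_L, T_R$ with $a+b=s$, $|T_L|+|T_R| \le \lfloor hs\rfloor$, $M_j(W_L)\subseteq T_R$ and $M_j(W_R)\subseteq T_L$ for all $j$.

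For fixed sets with $|T_R| = t_R \ge a$ and $|T_L| = t_L\ge b$, a single uniform matching satisfies $M(W_L) \subseteq T_R$ with probability $\binom{t_R}{a}/\binom{n}{a} \le (t_R/n)^a$. It also satisfies $M^{-1}(W_R)\subseteq T_L$ with probability at most $(t_L/n)^b$, conditionally on the first event, up to a harmless adjustment. We can sidestep the correlation by bounding the joint probability by $\binom{n-a}{\cdot}$ ratios, which are at most $(t_R/n)^a (t_L/n)^b \le (hs/n)^s$. By independence over the $D$ matchings the probability is at most $(hs/n)^{Ds}$. We then union bound over the at most $\binom{2n}{s}$ choices of $W$ and $\binom{2n}{\lfloor hs\rfloor}$ choices of $T=T_L\cup T_R$:
\begin{equation*}
\Pr[\text{bad}] \le \sum_{s=1}^{2\alpha n} \binom{2n}{s}\binom{2n}{hs}\Big(\frac{hs}{n}\Big)^{Ds} \le \sum_{s} \Big(\frac{2en}{s}\Big)^{s}\Big(\frac{2en}{hs}\Big)^{hs}\Big(\frac{hs}{n}\Big)^{Ds}.
\end{equation*}
Collecting powers of $s/n$, the $s$-th term equals $\big[c_h\,(s/n)^{D-h-1}\big]^s$ for an explicit constant $c_h$ that is essentially $2e\,(2e/h)^h h^{D}$. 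More carefully, we pair $h^{D}$ with the $(s/n)$ factor, writing $(hs/n)^{D-h-1}$ and leaving a factor $h^{h+1}$ that combines with $(2e/h)^h\cdot 2e$ to give $(2e)^{h+1}$.

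The main obstacle is the bookkeeping that makes the choice of $\alpha$ land exactly. Using $s \le 2\alpha n$ gives $hs/n \le 2\alpha h = (4he^{h+1})^{-1/(D-h-1)}$, so $(hs/n)^{D-h-1}\le 1/(4he^{h+1})$. We arrange the constants so that each term is at most $2^{-s}$, which needs $c\,(2\alpha h)^{D-h-1} \le 1/2$ with $c$ of the form $2he^{h+1}$ (possibly after using $\binom{2n}{s}\le (2en/s)^s$ more economically, or bounding $\binom{2n}{\lfloor hs\rfloor}$ via $(2en/(hs))^{hs}$ together with $h\ge 1$). Then the sum is $<\sum_{s\ge1}2^{-s}=1$. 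If the constants do not match exactly, we would adjust which factors of $2$ are absorbed, since the stated $\alpha$ clearly comes from this calculation. We would also handle the integrality of $hs$ by using $\lfloor hs\rfloor$ and monotonicity, and treat $s$ with $hs \ge 2n$ (where the event is vacuous or trivial) separately. Finally, note that the hypothesis $h<D-1$ is exactly what ensures the exponent $D-h-1$ is positive.
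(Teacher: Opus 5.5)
There is a genuine gap, in fact two.

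First, the per-matching estimate $\Pr[M(W_L)\subseteq T_R \text{ and } M^{-1}(W_R)\subseteq T_L]\le (t_R/n)^a(t_L/n)^b$ is false, and no ratio-of-binomials argument rescues it. The two containments are positively correlated: when $W_L\subseteq T_L$ and $W_R\subseteq T_R$, an edge of $M$ joining $W_L$ to $W_R$ satisfies both at once, and such configurations are among those your union bound ranges over. Already for $W_L=T_L=\{u\}$ and $W_R=T_R=\{v\}$ the event is just $M(u)=v$, of probability $1/n$ rather than $1/n^2$. More generally, with $a=b=s/2$, $T_L=W_L$ and $T_R=W_R$, the probability is at least $\Pr[M(W_L)=W_R]=1/\binom{n}{s/2}$. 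For fixed $s$ this is of order $n^{-s/2}$, not $n^{-s}$.

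Second, even granting that bound, your bookkeeping does not close. With $\binom{2n}{s}\binom{2n}{\lfloor hs\rfloor}$ choices, the $s$-th term is $\bigl[(2e)^{h+1}h\,(hs/n)^{D-h-1}\bigr]^s$. Using $hs/n\le 2\alpha h$ with the stated $\alpha$, this is only bounded by $(2^{h-1})^s\ge 1$. The excess factor $2^{(h+1)s}$ comes from choosing $W$ and $T$ among all $2n$ vertices. It grows with $h$ and cannot be absorbed by reallocating constants, because the stated $\alpha$ is calibrated to a different count.

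The missing idea is to avoid two-sided sets entirely. Write $\Gamma(S)$ for the neighbourhood of a one-sided set $S$ on the opposite side. Bipartiteness gives $|N^{\le 1}(W)|\ge|\Gamma(W\cap L)|+|\Gamma(W\cap R)|$. So it suffices to show that every non-empty one-sided $S$ with $|S|=w\le 2\alpha n$ satisfies $|\Gamma(S)|>hw$, and then add the two inequalities.

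For $S\subseteq L$, take $U\subseteq R$ with $|U|=\lfloor hw\rfloor$; this is less than $n$ because $2\alpha h<1$. The event that every $M_j$ maps $S$ into $U$ has probability exactly $\bigl(\binom{\lfloor hw\rfloor}{w}/\binom{n}{w}\bigr)^D\le (hw/n)^{Dw}$. There is no correlation issue, since $M_j(S)$ is a uniform $w$-subset of $R$ and the matchings are independent.

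Union bounding over the $\binom{n}{w}\binom{n}{\lfloor hw\rfloor}$ pairs $(S,U)$ gives a $w$-th term of $\bigl[e^{h+1}h\,(hw/n)^{D-h-1}\bigr]^w\le 4^{-w}$. This is exactly where the stated $\alpha$ comes from. Summing gives $1/3$ per side, so with probability at least $1/3$ both sides expand and $G$ is an $(\alpha,h)$-expander. This is the paper's proof.
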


Before proving the lemma, let us use it to prove our efficiency lower bound.
\uncoverable*
\begin{proof}
    Set $h \coloneqq n^{1 / (r+1)}/15$.
    If $h < 1$, then no $n$-vertex graph admits an $(r, h)$-cover, so the statement is trivial.
    Now suppose that $h \geq 1$.
    Then set $D = 2\lceil h + 1\rceil$ and, as in \Cref{existence-of-lossless-expander},
    \begin{equation*}
        \alpha \coloneqq \frac{1}{2h} \left(\frac{1}{4 h e^{h + 1}}\right)^{1 / (D - h - 1)} \geq \frac{1}{2h} \frac{1}{e (4 h)^{1 / (h+1)}} \geq \frac{1}{2 e^2 h} \geq \frac{1}{15 h}
    \end{equation*}
    since $4 h \leq e^2 h \leq e^{h+1}$.
    By applying \Cref{existence-of-lossless-expander} with $n / 2$ in place of $n$, we get that there exists an $n$-vertex $(\alpha, h)$-expander $G$ with maximum degree at most $D$.
    Furthermore,
    \begin{equation*}
        \frac{(D + 1)^r}{\alpha} \leq 15h (2 (h + 2) + 1)^r \leq (15 h)^{r+1} = n.
    \end{equation*}
    Therefore, by \Cref{lossless-expanders-dont-admit-good-covers}, $G$ admits no $(r, h)$-cover.
\end{proof}

\begin{proof}[Proof of \Cref{existence-of-lossless-expander}]
    Denote $\beta \coloneqq 2\alpha$.
    Then $e^{h+1}h(\beta h)^{D-h-1} \leq 1/4$. Let $M_1, \dots, M_D$ be independent uniformly random perfect matchings in the complete bipartite graph $K_{n,n}$ with bipartition $A \dcup B$.
    We will show that $G = (A \dcup B, M_1 \cup \cdots \cup M_D)$ has the desired properties with positive probability.
    Indeed, by construction, $G$ always has $2n$ vertices and maximum degree at most $D$.

    For $S \subseteq A$, write $\Gamma(S) \coloneqq N^{\leq 1}(S) \cap B$, and for $S \subseteq B$, write $\Gamma(S) \coloneqq N^{\leq 1}(S) \cap A$.
    Now fix $W \subseteq A$ of size $w$ with $1 \leq w \leq \beta n$.
    If $|\Gamma(W)| \leq h|W| = hw$, then there is a set $U \subseteq B$ of size $\lfloor hw \rfloor$ such that $\Gamma(W) \subseteq U$.
    For any fixed such $U$, the probability that all $D$ matchings pair every vertex in $W$ with a vertex in $U$ is
    \begin{equation*}
        \left(\binom{\lfloor hw \rfloor}{w} / \binom{n}{w}\right)^D.
    \end{equation*}
    Therefore, by a union bound,
    \begin{equation*}
        \IP(\exists W \subseteq A,\ 1 \leq |W| \leq \beta n,\ |\Gamma(W)| \leq h|W|)
        \leq
        \sum_{w=1}^{\lfloor \beta n \rfloor}
        \binom{n}{w}
        \binom{n}{\lfloor hw \rfloor}
        \left(\binom{\lfloor hw \rfloor}{w} / \binom{n}{w}\right)^D.
    \end{equation*}
    Using the inequality $\binom{a}{b} \leq (e a / b)^b$, valid for all positive integers $a, b$ and the fact that $(e x / y)^y$ is monotonically increasing in $y$ for real numbers $x \geq y > 0$, we get that
    \begin{equation*}
        \sum_{w=1}^{\lfloor \beta n \rfloor}
        \binom{n}{w}
        \binom{n}{\lfloor hw \rfloor}
        \left(\binom{\lfloor hw \rfloor}{w} / \binom{n}{w}\right)^D \leq \sum_{w=1}^{\lfloor \beta n \rfloor}
        \left(\frac{en}{w}\right)^w
        \left(\frac{en}{hw}\right)^{hw}
        \left(\frac{hw}{n}\right)^{Dw}
    \end{equation*}
    The right-hand side equals
    \begin{equation*}
        \sum_{w=1}^{\lfloor \beta n \rfloor}
        \left(e^{h+1}h
        \left(\frac{hw}{n}\right)^{D-h-1}\right)^w \\
        \leq
        \sum_{w=1}^{\lfloor \beta n \rfloor}
        \left(e^{h+1}h(\beta h)^{D-h-1}\right)^w \\
        \leq
        \sum_{w=1}^{\infty}
        \frac{1}{4^w}
        =
        \frac{1}{3}.
    \end{equation*}
    The same estimate holds with $A$ and $B$ interchanged.
    Thus, with probability at least $1 - 2/3 > 0$, the following property holds: for every non-empty set $S \subseteq A$ or $S \subseteq B$ with $|S| \leq \beta n$, we have $|\Gamma(S)| > h|S|$.
    
    Fix an outcome for which this property holds.
    We claim that $G$ is an $(\alpha,h)$-expander.
    Let $W \subseteq A \dcup B$ be non-empty with $|W| \leq \alpha |V(G)| = 2\alpha n = \beta n$. Set $X \coloneqq W \cap A$ and $Y \coloneqq W \cap B$.
    Since $G$ is bipartite,
    \begin{equation*}
        |N^{\leq 1}(W)|
        =
        |X \cup \Gamma(Y)| + |Y \cup \Gamma(X)|
        \geq
        |\Gamma(Y)| + |\Gamma(X)|.
    \end{equation*}
    If $X$ is non-empty, then $|\Gamma(X)| > h|X|$, and if $Y$ is non-empty, then $|\Gamma(Y)| > h|Y|$.
    Since at least one of $X,Y$ is non-empty, it follows that $|N^{\leq 1}(W)| > h|X| + h|Y| = h|W|$. Thus $G$ is an $(\alpha,h)$-expander.
\end{proof}

\bibliographystyle{amsplain}
\bibliography{graph-covers_refs}

\end{document}